\def\today{\ifcase\month\or
  January\or February\or March\or April\or May\or June\or
  July\or August\or September\or October\or November\or December\fi
  \space\number\day, \number\year}
\newtheorem{theorem}{Theorem}
\newtheorem{lemma}[theorem]{Lemma}
\newtheorem{definition}{Definition}
\theoremstyle{definition}
\newtheorem{example}{Example}
\newcommand{\I}{\mathcal{I}}
\renewcommand{\P}{\mathrm{P}}
\newcommand{\n}{\mathbb{N}}
\newcommand{\z}{\mathbb{Z}}
\renewcommand{\r}{\mathbb{R}}
\newcommand{\cp}{\mathbb{C}} 
\newcommand{\ft}{\widehat}
\renewcommand{\S}{\mathbb{S}}
\newcommand{\wt}{\widetilde}
\newcommand{\om}{\omega}
\newcommand{\la}{\lambda}
\newcommand{\ga}{\gamma}
\newcommand{\al}{\alpha}
\newcommand{\ep}{\varepsilon}
\newcommand{\si}{\sigma}
\newcommand{\spec}{{\rm spec}}
\begin{document}


\title[]{Sharp Fourier Extension on the Circle under arithmetic constraints}
\author[Ciccone \& Gon\c{c}alves]{Valentina Ciccone  and Felipe Gon\c{c}alves}
\date{\today}
\subjclass[2010]{42B10}
\keywords{Circle, Fourier restriction, sharp inequalities, extremizers, Bessel functions, $B_h$-set}
\address{Hausdorff Center for Mathematics, Universit\"at Bonn,
Endenicher Allee 60,
53115 Bonn, Germany }
\email{ciccone@math.uni-bonn.de}
\address{IMPA - Estrada Dona Castorina 110, Rio de Janeiro, RJ - Brasil, 
22460-320
}
\email{goncalves@impa.br}
\allowdisplaybreaks


\begin{abstract}
We establish a sharp adjoint Fourier restriction inequality for the end-point Tomas-Stein restriction theorem on the circle under a certain arithmetic constraint on the support set of the Fourier coefficients of the given function. Such arithmetic constraint is a generalization of a {$B_3$-set}.
\end{abstract}


\maketitle


\section{Introduction}
In this paper we are interested in the optimal constant for the Fourier extension inequality
\begin{align}\label{the_inequality}
||\widehat{f\sigma}||_{L^6(\mathbb{R}^2)}^6\leq \text{C}_{\text{opt}}||f||_{L^2(\mathbb{S}^1)}^6 ~,
\end{align}
where $\sigma$ is the arc length measure on $\mathbb{S}^1$, $\widehat{f\sigma}$ is the Fourier transform of the measure $f\sigma$, 
$$\widehat{f\sigma}(x)=\int_{\mathbb{S}^1}f(\omega)e^{-ix\cdot \omega}d\sigma_\omega,\quad x\in\mathbb{R}^2,$$
and $\text{C}_{\text{opt}}$ is the optimal constant
$$\text{C}_{\text{opt}}:=\sup_{f\in L^2(\mathbb{S}^1),\; f\neq 0} ||\widehat{f\sigma}||_{L^6(\mathbb{R}^2)}^6||f||_{L^2(\mathbb{S}^1)}^{-6} ~.$$
This problem has attracted a lot of attention in the last decade.  
Existence of maximizers have been established in \cite{shao2016existence} and it is known that maximizers are smooth { \cite{shao2016smoothness,diogo_quil_smoothness}}, and that they can be chosen to be  non-negative and antipodally symmetric, see \cite{carneiro2017sharp}. In \cite{carneiro2017sharp}, and later in \cite{GN20}, it has been  established that constant functions are  local maximizers. In fact, it is conjectured that constant functions are indeed global maximizers in which case
\begin{align}\label{optconst}
\text{C}_{\text{opt}}= (2\pi)^4 \int_0^\infty J_0^6(r)rdr ~.
\end{align}
{If this were true, a full characterization of the complex valued maximizers is provided in \cite{carneiro2017sharp}. Moreover, in \cite{OSQ} it is shown that if \eqref{the_inequality} is maximized by constants then the following inequality
\begin{align*}
||\widehat{f\sigma}||_{L^{2k}(\mathbb{R}^2)}\leq \text{C}_{2k,\text{opt}}||f||_{L^2(\mathbb{S}^1)} ~,
\end{align*}
is also maximized by constants for every $k> 3$.} 

A major technical challenge in the study of extremizers for \eqref{the_inequality} lies in the fact that the threefold convolution $\sigma\ast\sigma\ast\sigma(x)$, which arises naturally when exploiting the evenness of the exponent in the right hand side of \eqref{the_inequality} and using Plancharel, blows up when $|x|=1$, {see \cite{carneiro2017sharp}}.

We recall that any complex-valued $f\in L^2(\S^1)$ can be expanded in Fourier series 
$$
f(\om) = \sum_{n\in\z} \ft f(n) \underline{\om}^n,
$$
where we let $\underline{\om}=x+iy$ if $\om=(x,y) \in \S^1$. We also define the spectrum of $f$ to be
$$
\spec(f)=\{n\in \z : \ft f(n)\neq 0\}.
$$
In \cite{oliveira2019band, barker2020band} the case of band-limited functions was explored, that is, when $|\spec(f)|<\infty$. Specifically, it has been shown that {constant functions are the unique maximizers among the class of real-valued, non-negative, antipodally symmetric functions $f\in L^2(\mathbb{S}^1)$ with  $\spec(f) \subseteq [-30,30]$ and $\spec(f) \subseteq [-120,120]$, respectively. Note that when restricting to the band-limited case the problem becomes finite-dimensional (a matter of computing the eigenvalues of a quadratic form) and it can be addressed numerically as done in \cite{oliveira2019band, barker2020band}.}

{In this paper we consider functions in $L^2(\mathbb{S}^1)$  whose spectrum can be infinite, but it satisfies certain arithmetic constraints. More specifically, we establish}
the desired sharp inequality for functions in $L^2(\mathbb{S}^1)$ whose spectrum is sufficiently \textit{sparse} in the following sense:

\begin{definition}\label{property_p3_definition}
A set $A \subset \z$ is said to be a $\mathrm{P}(3)$-set if for every $D\in A + A + A$ one (and only one) of the following {holds}:
\begin{itemize}
    \item $D$ is unique, that is, $D=a_1+a_2+a_3$, the triple $(a_1,a_2,a_3)\in A\times A\times A$ is unique modulo permutations and $a_i\neq - a_j$ for $i\neq j$;
    \item $D$ is trivial, that is, $D$ is not unique and the only way of representing $D$ is $D=D+a-a$ for some $a\in A\cap (-A)$.
\end{itemize}
\end{definition}
\noindent { We use the terms unique and trivial here merely for useful case distinction to be used later on in the proof of our main result. We extend such notion and give a more general definition in Section \ref{Section_NT} for arbitrary $h$-sums ${A+A+...+A}$, what we call $\P(h)$-sets. A complete list of all $\P(3)$-sets $A\subset [-3,3]\cap \z$ 
is }
\begin{align*}
& \{0\}, \ \pm\{1\}, \ \pm\{2\}, \ \,\pm\{3\}, \ \,\{-3,3\}, \ \,\{-2,2\}, \ \,\pm\{-2,3\}, \ \{-1,1\}, \ \pm\{-1,2\}, \\ & \pm\{-1,3\}, \  \pm\{0,1\}, \ \pm\{0,2\}, \pm \{0,3\}, \ \pm \{1,2\}, \ \pm\{1,3\}, \ \pm\{2,3\}, \ \{-3,0,3\}, \\ &  \pm\{-3,2,3\}, \ \pm\{-2,-1,3\}, \ \pm\{-2,0,2\}, \ \pm \{-2,0,3\}, \  \{-2,1,2\},  \ \pm\{-2,1,3\}, \\ & \pm\{-2,2,3\}, \ \{-1,0,1\}, \ \pm\{-1,0,3\}, \ \pm\{-1,2,3\}, \ \{-3,-2,2,3\}.  
\end{align*}
\noindent A simple example of a symmetric infinite $\P(3)$-set is $A=\{ \pm 6^{n}: n\geq 0\} \cup \{0\}$ (see Example \ref{exepower}). 

{ Beside providing explicit examples of constructions of $\P(3)$-sets one may ask how fast can the counting function
$$x\mapsto |\spec(f) \cap [-x,x] |$$
grow.}
In Example \ref{exe7} we construct an infinite symmetric $\P(3)$-set $A$ (via greedy choice) such that 
$$
|A \cap [-x,x] | \gtrsim x^{1/5},
$$
On the other hand, if $A$ is a $\P(3)$-set then it is easy to see that $A\cap [1,\infty]$ and $(-A) \cap [1,\infty]$ are $B_3$-sets (see Section \ref{Section_NT}) and thus if we consider the $\binom {|A_x|+2} 3$ multi-sets of size $3$ in $A_x=A\cap [1,x]$, the sums of the elements represent each number in $[1,3x]\cap \z_+$ at most once, hence
$$
3x \geq \binom {|A_x|+2} 3  = \tfrac16 (|A_x|+2)(|A_x|+1)|A_x|,
$$
and so $|A \cap [-x,x]| \lesssim  x^{1/3}$. Constructing $B_h$-sets with large {density} is a very hard task that have drawn a lot of attention in the literature, especially in the interplay of combinatorics, probability and number theory, and we refer to the introduction of \cite{cilleruelo2014infinite} and the references therein for further information. Since any $B_3$-set is a $\P(3)$-set  we can simply {
rely on Cilleruelo's 
result \cite{cilleruelo2014infinite}, see also Cilleruelo and Tesoro \cite{cilleruelo2015dense}, to obtain existence of} a $\P(3)$-set $A$ with only positive integers and counting function satisfying
$$
|A \cap [-x,x] | \gtrsim x^{\sqrt{5}-2},
$$
which is the current best 
{existence result}
in terms of the exponent $\sqrt{5}-2=0.23...$.

We are now ready to state our main result.

\begin{theorem}\label{theroem_main_result}
Let $f\in L^2(\mathbb{S}^1)$ be such that its spectrum
$$
\spec(f)=\{n\in \z : \ft f(n)\neq 0\}
$$
is a $\P(3)$-set. Then
$$||\widehat{f\sigma}||_{L^6(\mathbb{R}^2)}^6\leq (2\pi)^4 \bigg(\int_0^\infty J_0^6(r)rdr\bigg) ||f||_{L^2(\mathbb{S}^1)}^6.$$
and equality is attained {if and only if} $f$ is constant.
\end{theorem}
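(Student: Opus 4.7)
The plan is to turn the $L^6$-norm into a Bessel-weighted sum indexed by the spectrum $A:=\spec(f)$ via the Jacobi--Anger expansion, then to use the $\P(3)$-dichotomy to decouple the resulting double sum, and finally to conclude via a sharp Bessel integral inequality. In polar coordinates $\omega=(\cos\theta,\sin\theta)$, $x=(r\cos\phi,r\sin\phi)$, the identity $e^{-ir\cos(\theta-\phi)}=\sum_{n\in\z}(-i)^n J_n(r)e^{in(\theta-\phi)}$ gives $\widehat{f\sigma}(r\cos\phi,r\sin\phi) = 2\pi\sum_{n\in A}(-i)^n J_n(r)e^{in\phi}\ft f(n)$. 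Cubing and applying Parseval in $\phi$ to $|\widehat{f\sigma}|^6=|(\widehat{f\sigma})^3|^2$ yields
$$\|\widehat{f\sigma}\|_{L^6(\r^2)}^6 = (2\pi)^7\int_0^\infty \sum_{N\in A+A+A}|T_N(r)|^2\, r\,dr, \qquad T_N(r) := \sum_{\substack{n_1+n_2+n_3=N\\ n_i\in A}} J_{n_1}(r)J_{n_2}(r)J_{n_3}(r)\ft f(n_1)\ft f(n_2)\ft f(n_3).$$
Since $\|f\|_{L^2(\S^1)}^6 = (2\pi)^3\bigl(\sum_n |\ft f(n)|^2\bigr)^3$, the theorem is equivalent to the purely analytic estimate $\int_0^\infty\sum_N |T_N|^2\, r\,dr \le \bigl(\int_0^\infty J_0^6 r\,dr\bigr)\bigl(\sum_{n\in A}|\ft f(n)|^2\bigr)^3$.

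Next I would apply the $\P(3)$-dichotomy to the outer sum. For a unique $N$ with representation $\{k_1,k_2,k_3\}$, the inner sum $T_N$ reduces to $\mathrm{mult}(N)\,J_{k_1}J_{k_2}J_{k_3}\ft f(k_1)\ft f(k_2)\ft f(k_3)$ with no surviving cross terms, where $\mathrm{mult}(N)$ is the number of orderings of $\{k_1,k_2,k_3\}$. For a trivial $D$, all representations are $(D,a,-a)$ with $a\in B := A\cap(-A)$, so, using $J_{-a}=(-1)^a J_a$, $T_D$ factors as $J_D(r)\ft f(D)$ times a Bessel-weighted sum over $a\in B$ (with combinatorial coefficients depending on whether $a$ coincides with $0$ or $\pm D$). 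Expanding the right-hand side $\bigl(\sum_n |\ft f(n)|^2\bigr)^3=\sum_{(n_1,n_2,n_3)\in A^3}\prod|\ft f(n_i)|^2$ and pairing each ordered triple with its contribution on the left reduces the theorem to two families of Bessel-integral inequalities: for each unique multiset $\{k_1,k_2,k_3\}$,
$$\mathrm{mult}(N)\int_0^\infty J_{k_1}^2(r)J_{k_2}^2(r)J_{k_3}^2(r)\,r\,dr \le \int_0^\infty J_0^6(r)r\,dr,$$
with equality only for $k_1=k_2=k_3=0$; and an analogous sharp estimate controlling the cross terms in $|T_D|^2$ indexed by $(a,a')\in B\times B$ in terms of $\int J_D^2 J_a^2 J_{a'}^2 r\,dr$.

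Both families of Bessel estimates should follow from explicit product identities and monotonicity for Bessel integrals in the spirit of \cite{carneiro2017sharp,GN20}; crucially, the uniqueness condition $k_i\ne -k_j$ is exactly what rules out the degenerate configurations in which these inequalities would fail (the $k_i=-k_j$ cases being absorbed into the trivial block). Summing over $N$ then gives the desired inequality; equality in every term forces $\ft f(n)=0$ for all $n\ne 0$, so $f$ is constant. I expect the main obstacle to be the trivial block: the off-diagonal terms in $|T_D|^2$ are indexed by independent pairs $(a,a')\in B^2$ and carry alternating signs $(-1)^{a+a'}$, so naive applications of $\sum_{n\in\z}J_n^2(r)=1$ or straightforward Cauchy--Schwarz will produce constants that are off from $\int J_0^6 r\,dr$; closing the case with the sharp constant will likely require a careful sign-tracking rearrangement combined with the sharp Bessel estimate.
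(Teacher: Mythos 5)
Your reduction follows the same route as the paper: expand $\widehat{f\sigma}$ in the Bessel basis, apply Plancherel in the angular variable to index the sum by $D\in A+A+A$, split according to the unique/trivial dichotomy of Definition \ref{property_p3_definition}, and reduce to inequalities comparing $\I(n,m,\ell)=\int_0^\infty J_n^2J_m^2J_\ell^2\,r\,dr$ with $\I(0,0,0)$. The unique block is handled correctly. However, the two steps you defer are where essentially all of the work lies, and one of them is false in the form you state it. First, the unique and trivial blocks cannot be closed by two independent families of estimates, because they compete for the same monomials $\prod_i|\ft f(n_i)|^2$ in the expansion of $\bigl(\sum_n|\ft f(n)|^2\bigr)^3$. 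Once the cross terms of $|T_D|^2$ over $(a,a')\in B\times B$ are controlled (the paper does this with the triangle inequality together with $rs\le\tfrac12(r^2+s^2)$ and $r^3s\le\tfrac58 r^4+\tfrac18 s^4+\tfrac14 r^2s^2$, which is where the sign-tracking you worry about is absorbed), a monomial such as $|\ft f(n_1)|^2|\ft f(n_2)|^2|\ft f(n_3)|^2$ with $n_1,n_2\in A\cap(-A)$ receives contributions from \emph{both} blocks, and the resulting coefficient comparison demands $15\,\I(n,m,\ell)\le\I(0,0,0)$, $9\,\I(n,n,m)\le\I(0,0,0)$, $\tfrac{15}{2}\I(n,n,0)\le\I(0,0,0)$, $5\,\I(0,0,n)\le\I(0,0,0)$ and $3\,\I(n,n,n)\le\I(0,0,0)$, not the weaker $\mathrm{mult}(N)\,\I\le\I(0,0,0)$ coming from the unique block alone.

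Second, and more seriously, these Bessel inequalities do not ``follow from explicit product identities and monotonicity'': two of them are \emph{false}. Numerically $9\,\I(1,1,2)>\I(0,0,0)$ and $15\,\I(3,2,0)>\I(0,0,0)$. The paper rescues exactly these cases arithmetically: the configurations that would require them (namely $\{1,2\}\subset A\cap(-A)$, respectively $\{3a,2b,0\}\subset A$ with $2$ or $3$ also in $-A$) are incompatible with the $\P(3)$ property because $1+1+1=2+2-1$ and $3+3+0=2+2+2$, so only the constant $6$ is needed there, and $6\,\I(1,1,2)<\I(0,0,0)$, $6\,\I(3,2,0)<\I(0,0,0)$ do hold. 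Without this interplay between the arithmetic hypothesis and the exceptional Bessel integrals the coefficient comparison breaks down. Moreover, the inequalities that are true are not soft: they are established via Olenko/Krasikov-type pointwise bounds for large indices and, for the finitely many remaining small indices, a quadrature formula for band-limited radial functions with explicit tail control (Lemmas \ref{lemma_bounds_on_bessel} and \ref{lemnumerical}); this verification is an essential component of the proof rather than a routine monotonicity argument. Finally, note that $5\,\I(0,0,1)=\I(0,0,0)$ holds with \emph{equality}, so your claim that ``equality in every term forces $\ft f(n)=0$ for all $n\neq 0$'' does not follow from strictness of the Bessel estimates alone and requires tracking where the earlier triangle and AM--GM inequalities are strict.
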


To the best of our knowledge it is the first time that this inequality is established for functions $f$ with an infinite spectrum and that simultaneously do not need to be {``close''} to constant functions. 
{A simple function which is not comtemplated by the previous results \cite{carneiro2017sharp,oliveira2019band, barker2020band} is $f(\underline{\om})= 1 + c\underline{\om}^n$ for arbitrary $n$ and large $c$. The function $f$ is not real-valued nor non-negative antipodally symmetric, hence we cannot apply the results in \cite{oliveira2019band, barker2020band}. Moreover, for large $c$ we cannot apply the local result of \cite{carneiro2017sharp}. However, the set $A=\{0,n\}$ is a $\P(3)$-set, and thus Theorem \ref{theroem_main_result} applies. }

Clearly, by translation invariance, one could instead ask that the spectrum of $\om \mapsto e^{i \tau \cdot \om} f(\om)$ is a $\P(3)$-set for some $\tau\in \r^2$ and obtain the same inequality. 

{The proof of Theorem \ref{theroem_main_result} relies crucially on some refined estimates on integrals involving the product of six Bessel functions. Some of these integrals involve Bessel functions of lower order and need to be estimated numerically.}
In Lemma \ref{lemma_bounds_on_bessel} we estimate such integrals by employing a new method (quite different from \cite{e2017estimates,oliveira2019band, barker2020band}) that avoids doing any numerical integration, and makes use instead of a known quadrature formula for band-limited functions in $\r^2$.

\subsection{Overview}
This paper is organized as follows. In Section \ref{Section_NT} we give a precise definition of {$\mathrm{P}(h)$-set}. Then we propose some examples of 
{$\mathrm{P}(h)$-set} with non-trivial symmetric subsets. 
In Section \ref{Section_BF} we study some refined estimates on integral involving the product of six Bessel functions. In Section \ref{Section_MR} we prove our main result. Finally, in Section \ref{extra_section} we propose a further example of application of the developed strategy to the study of sharp inequalities.

\section{A Generalization of {$B_h$-sets} }\label{Section_NT}
{ A subset $S\subseteq\mathbb{Z}$ is said to be a $B_h$-set, with $h\geq 2$, if for any $a_1,...,a_h,b_1,...,b_h\in S$ such that $a_1+...+a_h=b_1+...+b_h$ we have that $(a_1,...,a_h)$ is a permutation of $(b_1,...,b_h)$. If $h=2$ the set $S$ is sometimes said to be a Sidon set \footnote{  This has not to be confused with the other definition of Sidon set according to which a set $E$ is a Sidon set if every continuous function $f:\S^1\to\cp$ with $\spec(f)\subseteq E$ has absolutely convergent Fourier series.  To avoid confusion we will always refer to $B_h$-sets with $h=2$ as $B_2$-sets.}.
We are interested in defining a suitable generalization of $B_h$-sets to account for the case of sets $A\subseteq\mathbb{Z}$ with non-trivial symmetric subsets, namely such that $|A\cap -A|\geq 3$.
 It is immediate to see that such symmetric sets cannot be $B_h$-sets: in fact, for example, when $h$ is even there is always more than one way of representing zero as sum of $h$ elements in $A$, whereas when $h$ is odd there is always more than one way of representing any element in $A$ as a sum of elements in $A$.}

In what follows we let $A^k$ denotes the iterated sum of $k$ copies of $A$, e.g. $A^3=A+A+A$. 

\begin{definition}[Property $\mathrm{P}(h)$]  \label{property_p_definition}
We say that the set $A$ satisfies property $\mathrm{P}(h)$ {(with $h\geq 2$), or that $A$ is a $\P(h)$-set,}  if for any $D\in A^h$  there exists 
$0\leq \ell \leq h$ with the same parity of $h$ and a unique set of $\ell$ elements $\lbrace a_1, ..., a_\ell\rbrace$, with $a_1,...,a_\ell\in A$, $a_i\neq - a_j$ for all $i\neq j$, and such that any $h-$tuple $(b_1,...,b_h)$, with $b_1,...,b_h\in A$ and $b_1+...+b_h=D$ is a permutation of a $h-$tuple $(a_1,...,a_\ell, {u_1, -u_1, ... , u_{(h-l)/2}, -u_{(h-l)/2}})$ for some $u_1,...,u_{(h-l)/2}\in A \cap (-A)$.
\end{definition}

We recall that a set $E\subset\mathbb{Z}$ is said to be a $\Lambda_p$ set, for some $p>2$, if there exists a constant $C$ such that $$||f||_{L^p(\mathbb{S}^1)}\leq C ||f||_{L^2(\mathbb{S}^1)} $$
for all functions $f\in L^2(\mathbb{S}^1)$ whose spectrum is contained in $E$. It is well known  that $B_h$-sets are $\Lambda_{2h}$ sets (see e.g. \cite{bourgain2001ap}).

The following observations follow immediately from the definition of property $\mathrm{P}(h)$.

\begin{itemize}
\item  If $A$ is a $\P(h)$-set then $A \cap \n$ and $-A \cap \n$ are $B_h$-sets, and thus $A$ is a $\Lambda_{2h}$ set.
\item If $A$ is a $B_h$-set then $A$ is a $\P(h)$-set.
    \item If $|A\cap -A| \leq 2$ and $A$ is a $\mathrm{P}(h)$-set then $A$ is a $B_h$-set. 
    \item If $A$ is a $\P(h)$-set and $S\subseteq A$ then $S$ is a $\P(h)$-set.
     \item If $A$ is a $\P(h)$-set then $-A$ is a $\P(h)$-set.  
    \item If $A$ is a $B_h$-set, the set $A\cup -A$ does not necessarily satisfy property $\mathrm{P}(h)$: in fact, for example, the set of powers of two, $\lbrace 1,2,4,... \rbrace$, is a $B_2$-set, however the set  $\lbrace -1,-2,-4,... \rbrace \cup  \lbrace 1,2,4,... \rbrace $ does not satisfy property $\mathrm{P}(2)$, since, for example, $1+1=4-2$.
\end{itemize}

\subsection{Examples of $\P(h)$-sets}

Since any $B_h$-set is a $\P(h)$-set, the more interesting task is to provide examples of sets $A$ that satisfy property $\mathrm{P}(h)$ for some $h$ and that are such that $|A\cap -A| \geq 3$.

\begin{example} 
A sequence of positive integers $\lbrace \lambda_n \rbrace$ is said to be (Hadamard) lacunary if  $\lambda_{n+1}\geq q\lambda_n$ for some $q>1$.  Let $A_{\lambda,q}:=\lbrace \lambda_n\rbrace \cup (-\lbrace \lambda_n \rbrace) \cup \lbrace 0 \rbrace$. We claim that $A_{\lambda,q}$ satisfies property $\mathrm{P}(h)$ whenever $q> 2h-1$. To see this assume $a_1,...,a_h,b_1,...,b_h \in A_{\lambda,q}$ are such that \begin{align}\label{proof_first_equ}
a_1+...+a_h=b_1+...+b_h ~.
\end{align} 
On both sides of \eqref{proof_first_equ} we omit the zero terms and simplify terms of the form $a_j+a_i$ with $a_i=-a_j$ and $b_m+b_n$ with $b_m=-b_n$. {If no term is left on both sides of \eqref{proof_first_equ} then $a_1+...+a_h=b_1+...+b_h=0$ and $(a_1,...,a_h)$, $(b_1,...,b_h)$ are consistent with property $\P(h)$. On the other hand, if terms are left on at least one side of \eqref{proof_first_equ} we} further arrange them so that to have only positive terms on both sides obtaining
\begin{equation}\label{eq_after_rearrangement}
\alpha_{1}+...+\alpha_{h'}= \beta_{1}+...+\beta_{h''}
\end{equation}
where $\lbrace\alpha_1,...,\alpha_{h'},\beta_1,...,\beta_{h''}\rbrace\subseteq { \lbrace |a_1|,...,|a_h|,|b_1|,...,|b_h| \rbrace }$, $\alpha_1,...,\alpha_{h'},\beta_1,...,\beta_{h''} > 0$ and $h'+h''\leq 2h$, $h',h''\geq 1$. We want to show that $\lbrace \alpha_1,...,\alpha_{h'} \rbrace= \lbrace \beta_1,..., \beta_{h''}\rbrace$. We proceed in a similar way as in \cite[Proof of Theorem  3.6.4.]{loukas2014classical}. We start by showing that $\max\lbrace \alpha_1,...,\alpha_{h'} \rbrace= \max\lbrace \beta_1,..., \beta_{h''}\rbrace$. Assume by contradiction that $\max\lbrace \alpha_1,...,\alpha_{h'} \rbrace > \max\lbrace \beta_1,..., \beta_{h''}\rbrace$. Then $\max\lbrace \alpha_1,...,\alpha_{h'} \rbrace \geq q \max\lbrace \beta_1,..., \beta_{h''}\rbrace$. On the other hand we have
$$ \max\lbrace \alpha_1,...,\alpha_{h'} \rbrace \leq \beta_1+...+\beta_{h''}\leq h'' \max\lbrace \beta_1,...,\beta_{h''} \rbrace < q \max\lbrace \beta_1,...,\beta_{h''} \rbrace  $$
where in the last inequality we have used the fact that $h''\leq 2h-1 < q$. By assuming that $\max\lbrace \beta_1,..., \beta_{h''}\rbrace > \max\lbrace \alpha_1,...,\alpha_{h'} \rbrace$ we have a similar contradiction. Hence 
 $\max\lbrace \alpha_1,...,\alpha_{h'} \rbrace= \max\lbrace \beta_1,..., \beta_{h''}\rbrace$. Proceeding by induction we see that $h'=h''$ and $\lbrace \alpha_1,...,\alpha_{h'} \rbrace= \lbrace \beta_1,..., \beta_{h''}\rbrace$ as claimed. 
 {Because we got rid of the cases $a_i=-a_j$, $b_m=-b_n$ in the very beginning this further implies that there exists a unique (up to permutation) $h'$-tuple of elements in $A_{\lambda,q} $ that sums up to $D=a_1+...+a_h=b_1+...+b_h$ .} 
\end{example}

\begin{example} \label{exepower}As a particular case of the above result, the set $S_q:=\lbrace \pm q^n:  n\geq 0 \rbrace \cup \lbrace 0 \rbrace$ is a $\mathrm{P}(h)$-set whenever $q\geq 2h$. Note that the set $S_{q}$ for $q=2h-1$ does not satisfy property $\mathrm{P}(h)$ since 
$$
hq=\underbracket{q+...+q}_{h \text{ times}} = q^2 + \underbracket{(-q-...-q)}_{h-1 \text{ times}}
$$
It begs the question whether we can still prove Theorem \ref{theroem_main_result} with an adaptation of our method for the functions $f$ with $\spec(f)\subset S_q$ for $q=5,4,3,2$. We leave this question for future work. One interesting and possibly useful feature is that for $q=5,4,3$ we only have finitely many exceptions (modulo multiplication by $q^n$) breaking property $\P(3)$. For instance, for $q=3,4,5$ the only exceptions are { $1+1+1=9-3-3=3+0+0, \, 1+1+1=4-1-0, \, 1+1+1=5-1-1$}. In generality, for $h\leq q <2h$ the set $S_q$ only has finitely many exceptions not satisfying the property $\P(2h)$. To see this, let $2\leq h\leq q$, $b\in \z$ and $m\geq 2$ with $|b|+m\leq 2h+1$. We claim there are only finitely many solutions to
\begin{align}\label{beq}
b=\sum_{j=1}^{m-1} a_j q^{l_j}
\end{align}
with $a_j=\pm 1$, $l_i \geq l_{i+1}\geq 0$ and with the property that $l_i=l_j$ implies $a_i=a_j$. Such claim with $b=0$ easily shows what we want. Note that if $l_{m-1}>0$ then $q$ divides $b$ and so $b=q$, $l_{m-1}=1$, and we obtain $1\pm 1 = \sum_{j=1}^{m-2} a_j q^{l_j-1}$. If $l_{m-1}=0$ then $1\pm 1 = \sum_{j=1}^{m-2} a_j q^{l_j}$. In any case, if we let $P_{m,b}=\{(a_j,l_j)_{j=1}^{m-1}: \text{solves } \eqref{beq}\}$ we deduce that 
$$
|P_{m,b}| = |P_{m-1,b-1}|+|P_{m-1,b+1}|.
$$
Now note that by unique expansion in base $q$ the set $P_{h,b}$ is a singleton for $|b|\leq  h$ except when $b=0$, in which case $P_{h,b}=\emptyset$, or $|b|=q=h$, in which case  $|P_{h,b}|=2$. This shows that $|P_{m,b}|<\infty$. The case $q=h-1$ has infinitely many exceptional cases, such as: $1+(h-1)^n+...+(h-1)^n = (h-1)^{n+1}+1+0$.

\end{example}

\begin{example} \label{exe7} For a given set $E$ let $E(x)$ be the counting function $E(x):=|E\cap [-x,x]|$.
It is easy to check that the above examples are such that $A_{\lambda, q}(x) \lesssim \log_{2h-1}(x)$ and $S_q(x)\sim \log_q(x) $. An example of a denser set that satisfies property $\mathrm{P}(h)$ can be straightforwardly constructed applying the following greedy algorithm {that generalizes the one of Erd{\"o}s for $B_2$ sets, see \cite{erdHos1981solved,cilleruelo2014infinite}. We start by setting $a_1:=1$ and $a_{-1}:=-a_1$}. Then we define the element $a_n$ to be the smallest integer greater than $a_{n-1}$ and such that the set $\lbrace -a_n,a_{-n+1}, ..., a_{-1}, a_1,...a_{n-1}, a_n \rbrace$ satisfies property $\mathrm{P}(h)$. Then we set {$a_{-n}:=- a_n$} and iterate the procedure. It is easy to check that the resulting sequence of integers $A$ is such that $A(x) \gtrsim x^{1/(2h-1)}$. In fact at each step there cannot be more than $(2n-2)^{2h-1}$ distinct elements of the type $a_{i_1}+...+ a_{i_h} - a_{j_1}-...-a_{j_{h-1}}$ with $-(n-1)\leq i_1, ..., i_h, j_1,..., j_{h-1} \leq n-1$ and therefore $a_n \leq (n-1)^{2h-1} +1$ and  $A(x) \gtrsim x^{1/(2h-1)}$. 
\end{example}

\begin{example} The construction of the following example is adapted from \cite{rudin1960trigonometric} (see also \cite{hewitt1959some}) where a similar strategy is used to construct Sidon sets that are not (Hadamard) lacunary.   For $n=0,1,2,...$ we set $N:=2^n$. Then we define $A_h$ to be the set of elements of the type
$$\pm ((2h)^{4N}+(2h)^{N+j}) \qquad j=0,...,N-1, \; n=0,1,2,...\, .$$
Such a set is not of the type of Example 1, in fact $A_h$ contains $N$ elements between $a_{n,0}:=((2h)^{4N}+(2h)^{N+0})$ and $a_{n,N-1}:= ((2h)^{4N}+(2h)^{N+N-1}) $, $a_{n,N-1} < 2 a_{n,0} $ ,  while sets of the type in Example 1 contain a bounded number of elements between $x$ and $2x$ as $x $ tends to infinity. We claim that the set $A_h$ satisfies property $\mathrm{P}(h)$. To see that, 
let $a_{n_i,j_i}, b_{n_i,j_i}\in A_h, \; i=1,...,h$, be such that 
$$a_{n_1,j_1}+ ... + a_{n_h,j_h} = b_{n_1,j_1}+...+ b_{n_h,j_h}.$$
After simplifying on both sides the elements of the type $x + (-x) $ and rearranging we obtain something like
\begin{equation} \label{sum_equal_zero}
    \alpha_1 c_{n_1,j_1} + ... + \alpha_s c_{n_s,j_s} =0
\end{equation}
where $|\alpha_i|\leq 2h-1$, $s\leq 2h$, $c_{n_i, j_i}\in A_h, \; i=1,...,s$,  and we assume that $ c_{n_1,j_1}< ... < c_{n_s,j_s}$. But then if $\al_1\neq 0$ we would have that $c_{n_1,j_1} $ is divisible by a lower power of $2h$ than $c_{n_2,j_2}, ..., c_{n_s,j_s}$ and therefore \eqref{sum_equal_zero} is impossible.
\end{example}

\section{Estimates for certain integrals of Bessel functions}\label{Section_BF}

A simple computation (using the integral representation for Bessel functions) shows that if we let $\underline{\om}=x+iy$ for $\om=(x,y)\in \r^2$ then
$$\widehat{\underline{\om}^n\sigma}(x,y)=2\pi (-i)^n \frac{J_n(\sqrt{x^2+y^2})}{(x^2+y^2)^{n/2}}  \underline{\om}^n, $$
where  $\si$ is the arc measure on $\S^1$ and $J_n$ is the Bessel function of first kind. The following lemma is crucial for the proof of our main result. In what follows, all numerical computations were performed using the \cite[version 2.15.3]{gp} computer algebra system.

\begin{lemma}\label{lemma_bounds_on_bessel}
We have the following inequalities:
\begin{enumerate}
    \item[(i)] For all {integers} $n>1$ it holds that
    \begin{align*}
        \int_0^\infty J_0^4(r)J_n^2(r)rdr< \frac{1}{5}\int_0^\infty J_0^6(r)rdr
    \end{align*}
    and
    $$
     \int_0^\infty J_0^4(r)J_1^2(r)rdr= \frac{1}{5}\int_0^\infty J_0^6(r)rdr.
    $$
    \item[(ii)]  For all {integers} $n>0$ it holds that
    \begin{align*}
        \int_0^\infty J_0^2(r)J_n^4(r)rdr< \frac{2}{15}\int_0^\infty J_0^6(r)rdr.
    \end{align*}
   \item[(iii)] For all {integers} $n>0$ it holds that
    \begin{align*}
        \int_0^\infty J_n^6(r)rdr < \frac{1}{3}\int_0^\infty J_0^6(r)rdr.
    \end{align*}
    \item[(iv)] For all {integers} $n,m >0$, $n\neq m$ and such that $(n,m)\neq (1,2)$ it holds that
    \begin{align*}
        \int_0^\infty J_n^4(r)J_m^2(r)rdr< \frac{1}{9}\int_0^\infty J_0^6(r)rdr.
    \end{align*}
    Moreover, 
    \begin{align*}
        \int_0^\infty J_1^4(r)J_2^2(r)rdr< \frac{1}{6}\int_0^\infty J_0^6(r)rdr.
    \end{align*}
     \item[(v)] For all {integers} $n>m>\ell\geq 0$ such that $(n,m,\ell)\neq (3,2,0)$ it holds that
    \begin{align*}
        \int_0^\infty J_\ell^2(r)J_m^2(r)J_n^2(r)rdr< \frac{1}{15}\int_0^\infty J_0^6(r)rdr.
    \end{align*}
    Moreover, 
    \begin{align*}
        \int_0^\infty J_3^2(r)J_2^2(r)J_0^2(r)rdr< \frac{1}{6}\int_0^\infty J_0^6(r)rdr.
    \end{align*}
    \end{enumerate}
\end{lemma}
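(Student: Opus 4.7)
The plan is to express each integrand as the radial profile of a band-limited function on $\r^2$, apply a radial quadrature formula to turn each integral into a discrete sum, and then compare the two sides termwise, supplemented by an integration-by-parts identity for the equality case in (i).

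\emph{From Bessel products to band-limited functions.} The identity $|\widehat{\underline{\om}^n\sigma}(x)|^2=4\pi^2 J_n^2(|x|)$ shows that each $J_n^2(r)$ is the radial profile of a function on $\r^2$ whose Fourier transform is supported in $B(0,2)$, being the convolution of two arc measures on $\S^1$. Each integrand in (i)--(v), as well as $J_0^6$ itself, is a product of three such factors, so it is a radial function with Fourier support in $B(0,6)$. I then invoke the classical Hankel-type quadrature formula for such functions: there are positive nodes $r_k=\al_k/6$, with $\al_k$ the positive zeros of $J_0$, and positive weights $w_k$ involving $J_1(\al_k)$, such that
$$\int_0^\infty g(r)\,r\,dr=\sum_{k\ge 1} w_k\, g(r_k)$$
holds exactly for every such $g$. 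Each inequality in (i)--(v) thus reduces to a comparison of two positive series in terms of explicit products $\prod_i J_{n_i}^{a_i}(r_k)$.

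\emph{Truncation, monotonicity, and the equality case.} I would prove each strict inequality by truncating the resulting series after a finite number of terms $K$, bounding the initial sum using explicit rational enclosures for $\al_k$, $J_0(\al_k/6)$ and $J_1(\al_k)$, and controlling the tail with the crude bound $|J_n(r)|\le (r/2)^n/n!$ together with the asymptotic decay $|J_n(r)|\lesssim r^{-1/2}$. Monotonicity of $n\mapsto J_n(r)$ at fixed $r$ (on the relevant interval) bundles together infinitely many cases in $n$, so that only a handful of small ones need be checked. The equality case of (i) at $n=1$ I handle separately via $J_1=-J_0'$: writing $J_0^4 J_1^2=\tfrac{1}{5}(J_0^5)' J_0'$, integrating by parts, and using the Bessel identity $(rJ_0')'=-rJ_0$, one obtains $\int_0^\infty J_0^4 J_1^2 r\,dr=\tfrac{1}{5}\int_0^\infty J_0^6 r\,dr$. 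The weakened constants $\tfrac{1}{6}$ for the exceptional tuples $(1,2)$ in (iv) and $(3,2,0)$ in (v) fall out of the same reduction with a looser termwise comparison.

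The principal obstacle I expect lies in the quantitative step: to avoid any numerical integration, the series comparison must be fully algebraic, with tail bounds tight enough to beat the (numerically small) gap between the two sides of each inequality. In practice this means retaining enough quadrature nodes, combining the global bound $|J_n|\le 1$ with the explicit $r^{-1/2}$ decay of Bessel functions for the tail, and exploiting monotonicity in $n$ to simultaneously dispose of infinitely many cases while keeping all estimates in closed algebraic form.
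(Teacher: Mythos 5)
Your overall architecture --- convert each integral into an exact discrete sum via a quadrature formula for radial band-limited functions, evaluate finitely many terms with certified enclosures, bound the tail of the resulting series, and treat the equality case in (i) by integration by parts using $J_0'=-J_1$ --- matches what the paper actually does for the finitely many small values of the indices (the paper uses the Ben Ghanem--Frappier formula with nodes at the zeros of $J_1$, truncates at $22000$ terms, and controls the tail with Krasikov's bounds), and your computation $\int_0^\infty J_0^4J_1^2\,r\,dr=\tfrac15\int_0^\infty J_0^6\,r\,dr$ is correct and is essentially the paper's argument for that identity.

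The genuine gap is the reduction to finitely many cases in the order parameters. You propose to bundle infinitely many $n$ together by ``monotonicity of $n\mapsto J_n(r)$ at fixed $r$,'' but no such pointwise domination holds: the positive zeros of $J_n$ and $J_{n+1}$ interlace, so at any zero of $J_n$ one has $J_{n+1}(r)^2>J_n(r)^2$, and hence neither the integrands nor the terms of your quadrature series can be compared termwise in $n$. Without this step your argument proves each inequality only for one $n$ at a time, and it cannot even be started, since the admissible truncation point of the quadrature series (where the uniform bound $r^{1/2}|J_n(r)|\le 1$ becomes valid, namely $r>2n$) itself depends on $\max\{n,m,\ell\}$; an a priori bound on the orders is therefore needed before the tail estimate can be run. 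The paper obtains the missing decay in $n$ from a different source: the exact evaluation $\int_0^\infty J_n^2(r)\,r^{-1}dr=\tfrac1{2n}$ (equation \eqref{gradshteyntable} with $\lambda=1$) combined with explicit uniform envelopes $r^{1/2}|J_0(r)|<\gamma$ and $r^{1/2}|J_n(r)|<B_n$ with $B_n=O(n^{1/6})$ (Olenko). Pulling four of the six Bessel factors out through these envelopes leaves $\int_0^\infty J_k^2(r)\,r^{-1}dr$ with $k$ the largest order, and the resulting bound of the form $(\text{const})\cdot B^4/(2k)$ is decreasing in $k$ and eventually falls below the target constant, which is what leaves only finitely many cases for the numerical/quadrature step. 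You need some ingredient of this kind. A secondary omission: the two- and three-parameter families in (iv) and (v) need extra care to be reduced to a single decreasing function of the largest index (the paper majorizes $B_\ell^2B_m^2$ by $\max\{B_{36}^2,B_{n-2}^2\}\max\{B_{36}^2,B_{n-1}^2\}$ for this purpose), and your sketch does not address this.
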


We start by recalling some known bounds on Bessel functions and Bessel integrals.

\begin{itemize}
    \item The following pointwise bound can be found in \cite[Corollary~9]{e2017estimates}
    \begin{align}\label{O&S_T_bound_J0}
       r^{1/2} |J_0(r)|< \gamma
    \end{align}
    for $r>0$, where $\gamma=0.89763 $ (which is a truncation of $\tfrac{9}{8}\sqrt{ \frac{2}{\pi} })$.
    \item The following pointwise bound was proven in \cite{olenko2006upper}
    \begin{align}\label{olenko}
       {r}^{1/2}|J_n(r)|< \beta \sqrt{n^{1/3}+\tfrac{\alpha}{n^{1/3}}+\tfrac{3\alpha^2}{10n}}
    \end{align}
     for $r>0$ and $n>0$  where $\beta=0.674886$ and $\alpha=1.855758$.
      \item  The following identity can be found in \cite[Equation~6.574-2]{gradshteyn2014table}
    \begin{align}\label{gradshteyntable}
        \int_0^\infty J_n^2(r)r^{-\lambda}dr= \frac{\Gamma(\lambda)\Gamma(n+\tfrac{(1-\lambda)}{2})}{2^\lambda\Gamma\big( \tfrac{1+\lambda}{2} \big)^2 \Gamma\big( n + \tfrac{1+\lambda}{2}\big)}
    \end{align}
    for $0<\la<2n+1$.
\end{itemize}
We are now ready to prove the lemma. For simplicity we define
\begin{align}\label{Inotation}
I(n_1,...,n_6):=\int_0^\infty J_{n_1}(r)...J_{n_6}(r)rdr  \ \text{ and } \ \I(n_1,n_2,n_3):=I(n_1,n_1,n_2,n_2,n_3,n_3).
\end{align}
\proof
First we require a good lower bound for $\I(0,0,0)$ which is
$$
\I(0,0,0)> 0.33682.
$$
Such numerical lower bounds (and the upper bounds below) were done using Lemma \ref{lemnumerical} and evaluating the sum $\wt \I(n,m,\ell)$ with high precision (nowadays most computer algebra systems can do it extremely fast).

We start by proving the estimate in (i). Using \eqref{O&S_T_bound_J0} and \eqref{gradshteyntable}
we obtain
\begin{align}\label{first_case}
\begin{split}
    \I(0,0,n)  < \ga^4 \int_0^\infty J_n^2(r)r^{-1} dr
    =  \ga^4 \frac{1}{2n}.
\end{split}
\end{align}
One can easily check that $\ga^4 \tfrac{1}{2n}\leq \tfrac{1}{5}0.33682$ when $n\geq 5$. For $n=2,3,4$ we have
$$
\I(0,0,n) < \wt \I(0,0,n) + 10^{-4}\leq \wt \I(0,0,2) +  10^{-4} = 0.0370...,
$$
which is visibly less than $\tfrac{1}{5}0.33682=0.067364$. Integration by parts in conjunction with the relation $J_0(r)=J_1'(r)+\tfrac{1}r J_1(r)$ shows the desired identity $\I(0,0,0)=5\I(0,0,1)$.

To prove item (ii) we use \eqref{O&S_T_bound_J0}, \eqref{olenko} and \eqref{gradshteyntable} to obtain
\begin{align*}
\begin{split}
    \I(0,n,n) <  \ga^2 \beta^2 \bigg( n^{1/3}+\frac{\alpha}{n^{1/3}} +\frac{3\alpha^2}{10 n} \bigg) \int_0^\infty J_n^2(r)r^{-1}dr =  \ga^2 \beta^2 \bigg( n^{1/3}+\frac{\alpha}{n^{1/3}} +\frac{3\alpha^2}{10 n} \bigg)\frac{1}{2n}.
\end{split}
\end{align*}
The right hand side above is a decreasing function of $n$ and one can check that it is less than $\tfrac{2}{15}0.33682$ for $n\geq 14 $. For $1\leq n \leq 13$ we have the numerical bounds
$$
\I(0,n,n) < \wt \I(0,n,n) +  10^{-4} \leq  \wt \I(0,1,1) + 10^{-4} = 0.0424...,
$$
which is less than $\tfrac{2}{15}0.33682=0.0449093...$.

To prove item (iii) we use \eqref{olenko} and \eqref{gradshteyntable} to obtain
\begin{align*}
    \begin{split}
        \I(n,n,n)  <  \bigg( \beta \sqrt{n^{1/3}+\frac{\alpha}{n^{1/3}}+\frac{3\alpha^2}{10n}} \bigg)^4 \frac{1}{2n}.
    \end{split}
\end{align*}
Also in this case the right hand side is a decreasing function of $n$. When $n\geq 9$ the right hand side is less than $\tfrac{1}{3}0.33682$, while for $1\leq n \leq  8$ we have the numerical bounds
$$
\I(n,n,n) < \wt \I(n,n,n) + 10^{-4}\leq  \wt \I(1,1,1) + 10^{-4}= 0.1049...,
$$
which is less than $\tfrac{1}{3}0.33682=0.112273...$.

To prove the estimate in (iv) first let for $n\geq 1$
$$
B_n = \beta \sqrt{n^{1/3}+\frac{\alpha}{n^{1/3}}+\frac{3\alpha^2}{10n}}
$$
and $B_0=\ga$. One can show that $B_n$ is increasing for $n\geq 6$ and $\max_{0\leq n\leq 5} B_n = B_{1}<B_{36}$.  Next we let $k=\max\{n,m\}$ and we use 
\eqref{olenko} and \eqref{gradshteyntable} to obtain
\begin{align*}
\begin{split}
   \I(n,n,m) < \max\{B_{36}^2,B_k^2\}^2  \int_0^\infty J_k^2(r) r^{-1}dr = \frac{\max\{B_{36}^2,B_k^2\}^2}{2k}.
\end{split}
\end{align*}
The right hand side is a decreasing function of $k$ and one can easily check that it is less than $\tfrac{1}{9}0.33682$ when $k\geq 49 $. For $1\leq k\leq 48$ with $(n,m)\neq (1,2)$ we rely on the numerical bounds
$$
\I(n,n,m) <  \wt \I(n,n,m) + 10^{-4} \leq \wt \I(2,2,3) + 10^{-4} = 0.0335...,
$$
which is visibly less than $\tfrac{1}{9}0.33682=0.037424...$.
Moreover,
$\I(1,1,2)<\wt \I(1,1,2) + 10^{-4}=0.0424... < \tfrac{1}{6}0.33682=0.0561...$.

To prove the estimate in (v) we can then use \eqref{olenko} and \eqref{gradshteyntable} to obtain
\begin{align*}
  \I(n,m,\ell) & < B_\ell^2 B_m^2 \int_0^\infty J_n^2(t) r^{-1}dr = \frac{B_\ell^2 B_m^2}{2n} \leq \frac{\max\{B_{36}^2,B_{\ell}^2\}\max\{B_{36}^2,B_{m}^2\}}{2n} \\ & \leq \frac{\max\{B_{36}^2,B_{n-2}^2\}\max\{B_{36}^2,B_{n-1}^2\}}{2n}.
\end{align*}
A tedious computation shows again that the right hand side above is indeed a decreasing function of $n$. One can easily check that it is less than $\tfrac{1}{15}0.33682$ when $n\geq 145 $, while for $1\leq n \leq 144$ with $(n,m,\ell)\neq (3,2,0)$ we have the numerical bounds
$$
\I(n,m,\ell) <  \wt \I(n,m,\ell) + 10^{-4} \leq \wt \I(4,2,0) + 10^{-4} = 0.0185...,
$$
which is less than $\tfrac{1}{15}0.33682=0.0224546...$. Moreover,
$\I(3,2,0)<\wt \I(3,2,0) +10^{-4}=0.0243... < \tfrac{1}{6}0.33682=0.0561...$.
\qed
  
 \begin{lemma}\label{lemnumerical}
 For all { integers} ${ k},m,\ell \geq 0$ with $\max\{{ k},m,\ell\}\leq 11519$ we have
 $$
\wt \I({ k},m,\ell) < \I({ k},m,\ell) < \wt \I({ k},m,\ell)  +  10^{-4},
 $$
 where
 $$
\wt  \I({ k},m,\ell)=\frac{2}{9}\sum_{n=0}^{22000} \frac{J_{{ k}}(\la_n/3)^2J_{m}(\la_n/3)^2J_{\ell}(\la_n/3)^2}{J_0(\la_n)^2}
 $$
 and $\{\la_n\}_{n\geq 0}$ are the nonnegative zeros of the Bessel function $J_1$ (with $\la_0=0$) .
  \end{lemma}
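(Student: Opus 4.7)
Our plan is to establish the exact infinite-sum identity
\[
\I(k,m,\ell) = \frac{2}{9}\sum_{n=0}^\infty \frac{J_k(\la_n/3)^2 J_m(\la_n/3)^2 J_\ell(\la_n/3)^2}{J_0(\la_n)^2}
\]
via a Plancherel--Fourier--Bessel quadrature for band-limited functions on $\r^2$, then to control the truncation error beyond $n=22000$. The key is to realise the integrand $F(r):=J_k^2(r) J_m^2(r) J_\ell^2(r)$ as $|G(x)|^2$ for a cleverly chosen band-limited $G$. Take
\[
G(x):=J_k(|x|) J_m(|x|) J_\ell(|x|)\, e^{i(k+m+\ell)\phi(x)},
\]
where $\phi(x)$ is the polar angle of $x$. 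Although $F$, viewed as a radial function on $\r^2$, has Fourier transform only supported in $\overline{B(0,6)}$, the auxiliary $G$ has Fourier transform supported in $\overline{B(0,3)}$. Indeed, by the identity $\widehat{e^{in\theta}d\sigma}(x)=2\pi(-i)^n J_n(|x|) e^{in\phi(x)}$ recalled at the start of Section~\ref{Section_BF}, $G$ is (up to a unimodular constant) the Fourier transform of the threefold convolution $\mu_k * \mu_m * \mu_\ell$ of circle measures $\mu_n:=e^{in\theta}d\sigma$, which is supported in $\S^1+\S^1+\S^1\subseteq \overline{B(0,3)}$.

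Since $|G|^2=F$ is radial, $\int_{\r^2}|G|^2\,dx=2\pi\,\I(k,m,\ell)$. I would then apply the Plancherel identity together with a Fourier--Bessel expansion of the radial profile of $\hat G$ on $[0,3]$ in the basis $\{J_0(\la_n\rho/3)\}_{n\geq 0}$ --- an orthogonal basis of $L^2([0,3],\rho\,d\rho)$, because the $\{\la_n\}$ are the non-negative zeros of $J_1$, which impose a Neumann-type boundary condition at $\rho=3$ (with $\la_0=0$ corresponding to the constant mode and orthogonality $\int_0^1 J_0(\la_n s)J_0(\la_m s)\, s\,ds=\tfrac12 J_0(\la_n)^2\delta_{nm}$). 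Parseval's identity then produces the displayed quadrature once one identifies the $n$-th Fourier--Bessel coefficient as a constant multiple of the sample $G(\la_n/3)$, whose modulus equals $(J_k J_m J_\ell)(\la_n/3)$.

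With the exact identity in place, the strict lower bound $\wt\I<\I$ is immediate, since all summands are non-negative and at least one omitted term (for some $n>22000$) is strictly positive. For the upper bound, I would estimate the tail
\[
\I-\wt\I = \frac{2}{9}\sum_{n>22000}\frac{J_k(\la_n/3)^2 J_m(\la_n/3)^2 J_\ell(\la_n/3)^2}{J_0(\la_n)^2}
\]
using Bessel asymptotics: $\la_n\sim\pi(n+\tfrac14)$ and $J_0(\la_n)^2\sim 2/(\pi\la_n)$, so $1/J_0(\la_n)^2\lesssim\la_n$. The hypothesis $\max\{k,m,\ell\}\leq 11519$ together with $n>22000$ forces $\la_n/3 > 23035 > 2\max\{k,m,\ell\}$, placing the sample points well past the Bessel turning point, where the asymptotic bound $|J_\nu(r)|\leq\sqrt{2/(\pi r)}$ applies. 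Consequently $J_k^2 J_m^2 J_\ell^2(\la_n/3)\lesssim \la_n^{-3}$, each term is $\lesssim \la_n^{-2}\lesssim n^{-2}$, and the tail sums to $\lesssim 1/22000 < 10^{-4}$.

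The main obstacle lies in the first step: passing from the radial $F$ of band-limit~$6$ to the non-radial $G$ of band-limit~$3$ (exploiting the angular phase $e^{i(k+m+\ell)\phi}$, which does not affect $|G|^2$ but halves the effective Fourier support), and justifying that the Fourier--Bessel expansion in the $J_0$-basis associated to zeros of $J_1$ produces coefficients corresponding exactly to samples $G(\la_n/3)$ rather than to samples of a zeroth-order inverse Hankel transform of $\tilde h^{(k+m+\ell)}$. This pairing depends on the single-angular-mode structure of $\hat G$ and must be handled with care.
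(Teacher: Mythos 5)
The tail estimate in your proposal is essentially the paper's: a lower envelope for $J_0(\la_n)^2$, the uniform bound $r^{1/2}|J_\nu(r)|<1$ for $r>2\nu$ (Krasikov), and a lower bound $\la_n\gtrsim n$. Two cautions there: the asymptotic envelope $|J_\nu(r)|\le\sqrt{2/(\pi r)}$ is not literally valid just past the turning point, so you need an effective version such as \cite[Theorem 3]{Kra14}; and the margin is tight, since the paper's explicit constants give $2.18\int_{22000}^{\infty}x^{-2}dx<10^{-4}$ with little room to spare, so ``$\lesssim 1/22000$'' cannot be left with an unspecified implied constant. (Your observation that the strict lower bound follows from positivity of the omitted terms is fine.)

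The genuine gap is in the derivation of the exact quadrature identity, and it is precisely the obstacle you flag at the end without resolving. The paper applies the Ben Ghanem--Frappier formula directly to the \emph{radial} function $f(x)=J_{k}(|x|/3)^2J_m(|x|/3)^2J_\ell(|x|/3)^2$: six factors each of exponential type $1/3$ give type $2$, so $\int_{\r^2}f=4\pi\sum_{n\ge0}f(\la_n)/J_0(\la_n)^2$ holds with no angular issues, and the identity for $\I(k,m,\ell)$ is immediate. Your route via Plancherel with $G(x)=J_k(|x|)J_m(|x|)J_\ell(|x|)e^{iN\phi}$, $N=k+m+\ell$, produces an $\ft G$ supported in $\overline{B(0,3)}$ but carrying the single angular mode $N$; the Parseval identity for its radial profile must then be taken in the order-$N$ Fourier--Bessel (Dini) basis, whose natural nodes and weights are attached to $J_N$, not to the zeros of $J_1$. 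Expanding instead in $\{J_0(\la_n\rho/3)\}$ yields coefficients proportional to samples of the \emph{order-zero} inverse Hankel transform of the profile, which is not $G$ unless $N=0$, so your argument does not produce $\wt \I(k,m,\ell)$ with nodes $\la_n/3$ and weights $J_0(\la_n)^{-2}$. The repair is not to fix the mode-$N$ bookkeeping (which would change the nodes) but to abandon the threefold factorization altogether: work with the radial sixfold product and invoke a quadrature formula valid for all radial $L^1$ functions of exponential type $2$, not only for perfect squares, as the paper does.
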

  
 \begin{proof}
First we use a particular case of a formula of Ben Ghanem and Frappier \cite{GhFr98} (although this identity can be found  in disguise in much older papers) which says that if $f\in L^1(\r^2)$ is radial and $\text{supp}( \ft f) \subset B_{\tfrac1{\pi}}(0)$ (or equivalently, if $f$ is analytic in $\cp^2$ and has exponential type at most $2$) then
$$
\int_{\r^2} f(x)dx = {4\pi}\sum_{n\geq 0} \frac{f(\la_n)}{J_0(\la_n)^2}.
$$
We can apply this formula for $f(x)=J_{{ k}}(\tfrac13|x|)^2J_{m}(\tfrac13|x|)^2J_{\ell}(\tfrac13|x|)^2$ to deduce that
$$
\I({ k},m,\ell)=\frac{2}{9}\sum_{n\geq 0} \frac{J_{{ k}}(\la_n/3)^2J_{m}(\la_n/3)^2J_{\ell}(\la_n/3)^2}{J_0(\la_n)^2}.
$$
We then let $\wt \I({ k},m,\ell)$ be the above sum truncated up to $n=22000$. To bound the tail we first apply  \cite[Theorem 3]{Kra14}, from which one easily deduce that
    \begin{align}\label{krasikov}
       r^{1/2} |J_{ k}(r)|< 1
    \end{align}
    for all ${ k}\geq 1$ and $r>2{ k}$. Noticing that $\la_{22001}/3=23039.65... > 2 \times 11519$ we obtain
$$
\frac{2}{9}\sum_{n > 22000} \frac{J_{{ k}}(\la_n/3)^2J_{m}(\la_n/3)^2J_{\ell}(\la_n/3)^2}{J_0(\la_n)^2} < {6}\sum_{n > 22000} \frac{1}{\la_n^3 J_0(\la_n)^2}.
$$
Secondly, we apply Krasikov's effective envolope \cite[Lemma 1]{Kra06} for $\nu=0$ (noting that $\mu=3$ and $J_0'(x)=-J_1(x)$) to obtain that
$$
J_0(\la_n)^2 > {0.99 \times \frac{2}{\pi \la_n}}
$$
for $n> 22000$ (indeed $J_0(\la_n)^2 \sim \frac{2}{\pi \la_n}$).
Now we apply a result of Makai \cite{Ma78} that shows that $\nu\mapsto \la_{\nu,n}/\nu$ is decreasing, where $\la_{\nu,n}$ is the $n$-th zero  of $J_\nu$.  It is also well-known that $\la_{\nu+1,n}>\la_{\nu,n}$ for all $n\geq 1$ and $\nu>-1$. Hence
$$
\la_n =\frac{\la_{1,n}}{1}  > \frac{\la_{3/2,n}}{3/2} > \frac{\la_{1/2,n}}{3/2} = \tfrac{2}3\pi n,
$$
because $J_{1/2}(x) = \sqrt{2\pi/x}\sin(x)$ (with a more careful search in the literature one could possibly derive $
\la_n\geq .99 \pi n$ for $n> 22000$, since $\la_n \sim \pi n$). We obtain that
$$
{6}\sum_{n > 22000} \frac{1}{\la_n^3 J_0(\la_n)^2} < 2.18 \sum_{n > 22000} \frac{1}{n^2} < 2.18 \int_{22000}^\infty x^{-2}dx < 10^{-4}.
$$
\end{proof}

\section{Proof of the main result}\label{Section_MR}

Let $f\in L^2(\mathbb{S}^1)$ be a complex valued function and let $A=\spec(f)$. Then by Hecke-Bochner formula we can write
\begin{align*}
 (2\pi)^{-7}||\widehat{f\sigma}||_{L^6(\mathbb{R}^2)}^6 & = (2\pi)^{-7} \int_{\mathbb{R}^2} \widehat{f\sigma}(x)\widehat{f\sigma}(x)\widehat{f\sigma}(x)\overline{\widehat{f\sigma}(x)\widehat{f\sigma}(x)\widehat{f\sigma}(x)}dx\\
    &= \sum_{\substack{n_1,...,n_6\in A\\ n_1+n_2+n_3=n_4+n_5+n_6}} \widehat{f}(n_1)\widehat{f}(n_2)\widehat{f}(n_3)\overline{\widehat{f}(n_4)\widehat{f}(n_5)\widehat{f}(n_6)} I(n_1,...,n_6)\\
    &= \sum_{D\in A^3}\sum_{\substack{n_1,...,n_6\in A\\ n_1+n_2+n_3=D\\ n_4+n_5+n_6=D}} \widehat{f}(n_1)\widehat{f}(n_2)\widehat{f}(n_3)\overline{\widehat{f}(n_4)\widehat{f}(n_5)\widehat{f}(n_6)} I(n_1,...,n_6)
\end{align*}
where we are using {the notation introduced in} \eqref{Inotation}. Now if $A$ satisfies $\mathrm{P}(3)$ we can split the last summation over the $D\in A^3$ that are unique and over those that are trivial. Note that by Definition \ref{property_p3_definition} if $0\in A^3$ then $0$ is trivial. We focus first on the case $D\in A^3$ that are unique for which we obtain the following
\begin{align*}
    (I): = & \sum_{\substack{D\in A^3\\ D \, unique}}\sum_{\substack{n_1,n_2,n_3,n_4,n_5,n_6\in A\\ n_1+n_2+n_3=D\\ n_4+n_5+n_6=D}} \widehat{f}(n_1)\widehat{f}(n_2)\widehat{f}(n_3)\overline{\widehat{f}(n_4)\widehat{f}(n_5)\widehat{f}(n_6)} I(n_1,n_2,n_3,n_4,n_5,n_6) \\
= & 6 \sum_{\substack{n_1,n_2,n_3\in A \\ |n_i|\neq |n_j| \text{ for } i\neq j}}  |\widehat{f}(n_1)|^2|\widehat{f}(n_2)|^2|\widehat{f}(n_3)|^2  \I(n_1,n_2,n_3)\\
& + 9 \sum_{\substack{n_1 \in A\setminus\lbrace 0\rbrace, \, n_3\in A\\ |n_1|\neq |n_3| }}  |\widehat{f}(n_1)|^4|\widehat{f}(n_3)|^2  \I(n_1,n_1,n_3)\\
& +   \sum_{n_1\in A\setminus\lbrace 0\rbrace}  |\widehat{f}(n_1)|^6 \I(n_1,n_1,n_1),
\end{align*}
Now we focus on the sum over the set  $\lbrace D \in A^3: \, D \, \text{ trivial} \rbrace$. We use the short hand notation $A_{s}=A\cap (-A)$, $A_t=A \cap \text{trivial}$ and $A_{s,t}=A_s \cap A_t$. In this case the set
$\lbrace (n_1,n_2,n_3)\in A\times A\times A: \, n_1+n_2+n_3=D \rbrace $
is the disjoint union of the following sets
\begin{align*}
  S_1(D)=  & \lbrace (D, a, -a): a\in A_s\setminus \lbrace \pm D \rbrace \rbrace\\
S_2(D)=    & \lbrace (-a, D, a): a\in A_s\setminus \lbrace \pm D \rbrace \rbrace\\
S_3(D)=    & \lbrace (-a, a, D): a\in A_s\setminus \lbrace \pm D \rbrace \rbrace\\
S_4(D)=    & \lbrace (D, D, -D), (-D, D, D), (D,- D, D) \rbrace.
\end{align*}
Letting $\ep_D=|S_4(D)|$ we obtain
\begin{align*}
    (II):= & \sum_{\substack{D\in A^3\\ D \; trivial}} \sum_{i,j=1}^4 \sum_{\substack{(n_1,n_2.n_3)\in S_i(D)\\ (n_4,n_5.n_6)\in S_j(D)}} \widehat{f}(n_1)\widehat{f}(n_2)\widehat{f}(n_3)\overline{\widehat{f}(n_4)\widehat{f}(n_5)\widehat{f}(n_6)} I(n_1,...,n_6) \\
   = & 9 \sum_{\substack{D\in A_t  \\ n_1,n_2\in A_s\setminus \lbrace \pm D \rbrace}} \widehat{f}(D)\widehat{f}(n_1)\widehat{f}(-n_1)\overline{\widehat{f}(D)\widehat{f}(n_2)\widehat{f}(-n_2)} I(D,n_1,-n_1,D,n_2,-n_2)\\
   & + 18 \, \mathfrak{Re} \bigg( \sum_{\substack{D\in A_{s,t}\setminus \lbrace 0 \rbrace \\ n_1\in A_s\setminus \lbrace \pm D \rbrace} }  \; \widehat{f}(D)\widehat{f}(D)\widehat{f}(-D)
   \overline{\widehat{f}(n_1)\widehat{f}(-n_1)\widehat{f}(D)} I(D,D,-D,D, n_1,-n_1) \bigg) \\
   & + 6 \, \mathfrak{Re}\bigg(\widehat{f}(0)\widehat{f}(0)\widehat{f}(0)\;  \sum_{n_1\in A_s\setminus \lbrace 0 \rbrace} \overline{\widehat{f}(n_1)\widehat{f}(-n_1)\widehat{f}(0)} I (0,0,0,0,n_1,-n_1) \bigg)\\
   & + 9 \sum_{D\in A_{s,t}\setminus\lbrace 0 \rbrace} \;  \; |\widehat{f}(D)|^4|\widehat{f}(-D)|^2 I(D,D,-D,D,D,-D)\\
   & + |\widehat{f}(0)|^6 I (0,0,0,0,0,0)\\
   = & 9  \sum_{\substack{D\in A_t \\ n_1,n_2\in A_s\setminus \lbrace \pm D \rbrace \\ |n_1|\neq |n_2|}} \widehat{f}(D)\widehat{f}(n_1)\widehat{f}(-n_1)\overline{\widehat{f}(D)\widehat{f}(n_2)\widehat{f}(-n_2)} I(D,n_1,-n_1,D,n_2,-n_2)\\
   & + 9 \; \sum_{\substack{D\in A_t \\ n_1\in A_s\setminus \lbrace \pm D \rbrace}} (2-\delta_{n_1=0}) |\widehat{f}(D)|^2|\widehat{f}(n_1)|^2|\widehat{f}(-n_1)|^2 I (D,D, n_1,-n_1, n_1,-n_1) \\
   & + 18 \, \mathfrak{Re} \bigg( \sum_{\substack{D\in A_{s,t}\setminus \lbrace 0 \rbrace \\ n_1\in A_s\setminus \lbrace \pm D \rbrace} }  \; \widehat{f}(D)\widehat{f}(D)\widehat{f}(-D)
   \overline{\widehat{f}(n_1)\widehat{f}(-n_1)\widehat{f}(D)} I(D,D,-D,D, n_1,-n_1) \bigg) \\
   & + 6 \, \mathfrak{Re}\bigg(\widehat{f}(0)\widehat{f}(0)\widehat{f}(0)\;  \sum_{n_1\in A_s\setminus \lbrace 0 \rbrace} \overline{\widehat{f}(n_1)\widehat{f}(-n_1)\widehat{f}(0)} I (0,0,0,0,n_1,-n_1) \bigg)\\
   & + 9 \sum_{D\in A_{s,t}\setminus\lbrace 0 \rbrace} \;  \; |\widehat{f}(D)|^4|\widehat{f}(-D)|^2 I(D,D,-D,D,D,-D)\\
   & + |\widehat{f}(0)|^6 I (0,0,0,0,0,0)
\end{align*}
Then {using the identity $J_{-n}=(-1)^nJ_n$ and} by the triangle inequality we obtain
\begin{align*}
    (II)\leq & \; 9  \sum_{\substack{D\in A_t \\ n_1,n_2\in A_s\setminus \lbrace \pm D \rbrace \\ |n_1|\neq |n_2|}} |\widehat{f}(D)|^2|\widehat{f}(n_1)\widehat{f}(-n_1)||{\widehat{f}(n_2)\widehat{f}(-n_2)}|\I(D,n_1,n_2)\\
   & + 9 \; \sum_{\substack{D\in A_t \\ n_1\in A_s\setminus \lbrace \pm D \rbrace}} (2-\delta_{n_1=0}) |\widehat{f}(D)|^2|\widehat{f}(n_1)|^2|\widehat{f}(-n_1)|^2 \I (D,n_1,n_1)\\
   & + 18 \; 
   \sum_{\substack{D\in A_{s,t} \setminus \{ 0 \}\\ n_1\in A_s \setminus \lbrace \pm D \rbrace} }|\widehat{f}(D)|^2 |\widehat{f}(D)\widehat{f}(-D)||\overline{\widehat{f}(n_1)\widehat{f}(-n_1)}| \I(D,D,n_1)\\
   & +6  \; 
   \sum_{n_1\in A_s \setminus \lbrace 0 \rbrace} |\widehat{f}(0)|^2|\widehat{f}(0)\widehat{f}(0)| |\overline{\widehat{f}(n_1)\widehat{f}(-n_1)}| \I(0,0,n_1) 
   \\
   & + 9 \; \sum_{D\in A_{s,t} \setminus\lbrace 0 \rbrace}  |\widehat{f}(D)|^4|\widehat{f}(-D)|^2 \I(D,D,D)\\
   & +  |\widehat{f}(0)|^6 \I(0,0,0) ~.
\end{align*}
Next, by using the known inequalities 
$$rs \leq \tfrac12 r^2 + \tfrac12 s^2 \text{ and }r^3s\leq \tfrac{5}{8}r^4 + \tfrac{1}{8}s^4+\tfrac{1}{4}r^2s^2
$$
for $r,s>0$, we further get the following inequality
\begin{align*}
    (II)\leq & \; 9\;  \sum_{\substack{D\in A_t \\ n_1,n_2\in A_s\setminus \lbrace \pm D \rbrace \\ |n_1|\neq |n_2|}}|\widehat{f}(D)|^2\bigg(\tfrac{|\widehat{f}(n_1)|^2+|\widehat{f}(-n_1)|^2}{2}\bigg)
    \bigg(\tfrac{|\widehat{f}(n_2)|^2+|\widehat{f}(-n_2)|^2}{2}\bigg) \I(D,n_1,n_2)\\
   & + 9 \; \sum_{\substack{D\in A_t \\ n_1\in A_s\setminus \lbrace \pm D \rbrace}} (2-\delta_{n_1=0}) |\widehat{f}(D)|^2|\widehat{f}(n_1)|^2|\widehat{f}(-n_1)|^2 \I (D,n_1,n_1)\\
   & + 18  \sum_{\substack{D\in A_{s,t} \setminus \{ 0 \} \\ n_1\in A_s \setminus \lbrace \pm D,0 \rbrace} } |\widehat{f}(D)|^2 \bigg(\tfrac{|\widehat{f}(D)|^2+|\widehat{f}(-D)|^2}{2}\bigg)\bigg(\tfrac{|\widehat{f}(n_1)|^2+|\widehat{f}(-n_1)|^2}{2}\bigg) \I(D,D,n_1)\\
   & + 18  \; 
   \sum_{D\in A_{s,t}\setminus \lbrace 0 \rbrace} \; \bigg( \tfrac{5}{8}|\widehat{f}(D)|^4 +\tfrac{1}{8}|\widehat{f}(-D)|^4 + \tfrac{1}{4}|\widehat{f}(D)\widehat{f}(-D)|^2 \bigg)|\widehat{f}(0)|^2 \I(D,D,0)\\
   & +6  \; 
   \sum_{n_1\in A_s \setminus \lbrace 0 \rbrace} |\widehat{f}(0)|^4\bigg(\tfrac{|\widehat{f}(n_1)|^2+|\widehat{f}(-n_1)|^2}{2}\bigg) \I(0,0,n_1) 
   \\
   & + 9 \sum_{D\in A_{s,t} \setminus\lbrace 0 \rbrace} \; |\widehat{f}(D)|^4|\widehat{f}(-D)|^2 \I(D,D,D)\\
   & + |\widehat{f}(0)|^6 \I(0,0,0) \\
     = & \; 9\;  \sum_{\substack{D\in A_t \\ n_1,n_2\in A_s\setminus \lbrace \pm D \rbrace \\ |n_1|\neq |n_2|}}|\widehat{f}(D)|^2|\widehat{f}(n_1)|^2|\widehat{f}(n_2)|^2 \I(D,n_1,n_2)\\
  & + 9 \; \sum_{\substack{D\in A_t  \\ n_1\in A_s \setminus \lbrace \pm D \rbrace}} (2-\delta_{n_1=0}) |\widehat{f}(D)|^2|\widehat{f}(n_1)|^2|\widehat{f}(-n_1)|^2 \I (D, n_1,n_1)\\
   & + 18 \; 
   \sum_{\substack{D\in A_{s,t} \setminus \lbrace 0 \rbrace \\ n_1\in A_s \setminus \{ \pm D, \, 0 \}}} |\widehat{f}(D)|^2 \bigg(\tfrac{|\widehat{f}(D)|^2+|\widehat{f}(-D)|^2}{2}\bigg)|\widehat{f}(n_1)|^2 \I(D,D,n_1)\\
   & + \frac{27}{2} \; \sum_{D\in A_{s,t} \setminus \lbrace 0 \rbrace} \; |\widehat{f}(D)|^4 |\widehat{f}(0)|^2\I(D,D,0)\\
   & + \frac{9}{2}  \; \sum_{D\in A_{s,t}\setminus \lbrace 0 \rbrace}  \; |\widehat{f}(D)|^2 |\widehat{f}(-D)|^2 |\widehat{f}(0)|^2 \I(D,D,0)\\
    & +6 \; 
   \sum_{n_1\in A_{s,t} \setminus \lbrace 0 \rbrace} |\widehat{f}(0)|^4|\widehat{f}(n_1)|^2 \I(0,0,n_1) 
   \\
   & + 9 \sum_{D\in A_{s,t} \setminus\lbrace 0 \rbrace} |\widehat{f}(D)|^4|\widehat{f}(-D)|^2 \I(D,D,D)\\
   & +  |\widehat{f}(0)|^6 \I(0,0,0).
\end{align*}
We note that such inequalities hold with equality whenever $-A=A$ and $\overline{\ft f(-n)}=(-1)^n\ft f(n)$ 
{which is the case, for instance, if $f$ is real-valued and antipodally symmetric}. Now we sum everything together {and replace $A_t$ by $A$ (observing that $A_t=A$ if $A_s\neq \emptyset$ and $A_t=\emptyset$ if $A_s=\emptyset$)} to obtain the following upper bound
\begin{align*}
    (2\pi)^{-7}||\widehat{f\sigma}||_{L^6(\mathbb{R}^2)}^6 = & \; (I)+(II)\\
    \leq & \sum_{\substack{n_1,n_2,n_3\in A \\ |n_i|\neq |n_j| \, \forall i,j\in \lbrace 1,2,3 \rbrace, \, i\neq j}} \; (6+9\delta_{n_1, n_2\in A_s}) \; |\widehat{f}(n_1)|^2|\widehat{f}(n_2)|^2|\widehat{f}(n_3)|^2  \I(n_1,n_2,n_3)\\
  & + 9 \; \sum_{\substack{ n_1\in A_s, \;n_3\in A \\ |n_1|\neq |n_3|}} (2-\delta_{n_1=0}) |\widehat{f}(n_3)|^2|\widehat{f}(n_1)|^2|\widehat{f}(-n_1)|^2 \I (n_3,n_1,n_1)\\
   & + 9 \; 
   \sum_{\substack{n_1,n_3\in A_s \setminus \lbrace 0 \rbrace \\  |n_1|\neq|n_3|}} |\widehat{f}(n_3)|^2|\widehat{f}(-n_3)|^2|\widehat{f}(n_1)|^2 \I(n_3,n_3,n_1)\\   
& + 9 \sum_{\substack{n_1 \in A\setminus\lbrace 0\rbrace, \, n_3\in A\\ |n_1|\neq |n_3| }} (1+\delta_{n_1,n_3\in A_s\setminus\{0\}}) |\widehat{f}(n_1)|^4|\widehat{f}(n_3)|^2  \I(n_1,n_1,n_3)\\
 & + \frac{27}{2} \; \sum_{n_3\in A_s\setminus \lbrace 0 \rbrace}  \; |\widehat{f}(n_3)|^4 |\widehat{f}(0)|^2 \I(n_3,n_3,0)\\
   & + \frac{9}{2} \; \sum_{n_3\in A_s\setminus \lbrace 0 \rbrace}   \; |\widehat{f}(n_3)|^2 |\widehat{f}(-n_3)|^2 |\widehat{f}(0)|^2 \I(n_3,n_3,0)\\
    & +6 
   \sum_{n_1\in A_s \setminus \lbrace 0 \rbrace} |\widehat{f}(0)|^4|\widehat{f}(n_1)|^2 \I(0,0,n_1)
   \\
   & + 9 \sum_{n_3\in A_s \setminus\lbrace 0 \rbrace} \; |\widehat{f}(n_3)|^4|\widehat{f}(-n_3)|^2 \I(n_3,n_3,n_3)\\
   & +   \sum_{n_1\in A\setminus\lbrace 0\rbrace}  |\widehat{f}(n_1)|^6 \I(n_1,n_1,n_1)\\
   & + |\widehat{f}(0)|^6 \I(0,0,0).
\end{align*}
Next we observe that we may write $(2\pi)^{-3}||f| |_{L^2(\mathbb{S}^1)}^6$ as 
\begin{align*}
    (2\pi)^{-3}||f||_{L^2(\mathbb{S}^1)}^6 = & 
    \sum_{\substack{n_1,n_2,n_3\in A \\ |n_i|\neq |n_j| \, \forall i,j\in \lbrace 1,2,3 \rbrace, \, i\neq j}} \;  |\widehat{f}(n_1)|^2|\widehat{f}(n_2)|^2|\widehat{f}(n_3)|^2\\
    & + 3 \sum_{\substack{n_1\in A_s\setminus\lbrace 0 \rbrace, \; n_3\in A\setminus \lbrace 0 \rbrace \\ |n_1|\neq |n_3|}}  \; |\widehat{f}(n_1)|^2|\widehat{f}(-n_1)|^2|\widehat{f}(n_3)|^2\\
    & +  3\sum_{\substack{n_1\in A\setminus\lbrace 0 \rbrace, \; n_3\in A\setminus \lbrace 0 \rbrace \\ |n_1|\neq |n_3|}}  \; |\widehat{f}(n_1)|^4|\widehat{f}(n_3)|^2 \\
    & +3\sum_{n_1\in A\setminus\lbrace 0 \rbrace} \; |\widehat{f}(n_1)|^4|\widehat{f}(0)|^2\\
    & +3\sum_{n_1\in A_s\setminus\lbrace 0 \rbrace} \;  |\widehat{f}(n_1)|^2|\widehat{f}(-n_1)|^2|\widehat{f}(0)|^2\\
    & +3 \sum_{n_1\in A\setminus\lbrace 0 \rbrace} \; |\widehat{f}(0)|^4|\widehat{f}(n_1)|^2\\
    & + 3 \sum_{n_1\in A_s\setminus\lbrace 0 \rbrace}  \; |\widehat{f}(-n_1)|^2|\widehat{f}(n_1)|^4\\
     & + \sum_{n_1\in A\setminus\lbrace 0 \rbrace} |\widehat{f}(n_1)|^6\\
    & + |\widehat{f}(0)|^6.
\end{align*}
Then by comparison of coefficients (going from bottom to top), the inequality $||\widehat{f\sigma}||_{L^6(\mathbb{R}^2)}^6\leq (2\pi)^4 \I(0,0,0) ||f||_{L^2(\mathbb{S}^1)}^6$ would follow if  
\begin{align*}
& \bullet 3 \I(n,n,n) \leq \I(0,0,0),  \quad 5\I(0,0,n) \leq \I(0,0,0), \quad \tfrac{15}2 \I(n,n,0) \leq \I(0,0,0) \ \ (n>0)\\
& \bullet 9 \I(n,n,m) \leq \I(0,0,0), \quad 15 \I(n,m,\ell) \leq \I(0,0,0) \ \ ( n,m,\ell\geq 0 \text{ distinct}).
\end{align*}
All of them follow easily by Lemma \ref{lemma_bounds_on_bessel} except that the last two above are actually false, the exceptions being $(n,m)=(1,2)$ and $(n,m,\ell)=(3,2,0)$ respectively. However, note that the inequality $9 \I(1,1,2) \leq \I(0,0,0)$ is only needed if $\{1,2\}\subset A_s\setminus\{0\}$ which is impossible since { $1+1+1=2+2-1$} and so $A$ would not be a $\P(3)$-set. We conclude that in fact we only need $6 \I(1,1,2) \leq \I(0,0,0)$, which is true by Lemma \ref{lemma_bounds_on_bessel}. Similarly, the  inequality $15 \I(3,2,0)<\I(0,0,0)$ is only required if $\{3a, 2b,0\}\subset A$ for some $a,b\in \{\pm 1\}$ and either $2$ or $3$ also belong to $-A$. This cannot be true since $3+3+0=2+2+2$, and $A$ would not be a $\P(3)$-set. We conclude that the inequality we actually need is $6 \I(3,2,0)<\I(0,0,0)$, which follows from Lemma \ref{lemma_bounds_on_bessel}. This finishes the proof.\qed

\section{A further example of application}\label{extra_section}
Arguments similar to those in the previous section can be used to establish other sharp extension inequalities for functions in $L^2(\mathbb{S}^1)$ whose spectrum satisfies property $\mathrm{P}(h)$ for some suitable $h$. In this section we provide a further example of application for the case of the $L^2(\mathbb{S}^1)$ to $L^6_{rad}L^4_{ang}(\mathbb{R}^2)$ Fourier extension estimates. The case  of sharp $L^2(\mathbb{S}^1)$ to $L^6_{rad}L^2_{ang}(\mathbb{R}^2)$ Fourier extension estimates has been studied in { \cite{foschi2017some}, see also \cite{carneiro2019sharp}}.

\begin{theorem}
Let $f\in L^2(\mathbb{S}^1)$ be such that its spectrum $A$ satisfies property $\mathrm{P}(2)$. Then
$$||\widehat{f\sigma}||_{L^6_{rad}L^4_{ang}}^6\leq (2\pi)^{9/2}\bigg(\int_0^\infty J_0^6(r)rdr \bigg) ||f||_{L^2(\mathbb{S}^1)}^6.$$
The inequality is sharp and equality is attained if and only if $f$ is constant.
\end{theorem}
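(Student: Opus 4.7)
The proof strategy is to mirror the approach of Theorem \ref{theroem_main_result}, now adapted to the $L^4$ angular norm using property $\mathrm{P}(2)$ in place of $\mathrm{P}(3)$. The key preliminary step is to apply Plancherel twice: first expressing $\widehat{f\sigma}(re^{i\theta})$ as an angular Fourier series via the Hecke--Bochner identity $\widehat{\underline\omega^n\sigma}(re^{i\theta})=2\pi(-i)^n J_n(r) e^{in\theta}$, and then applying Parseval to $|\widehat{f\sigma}(re^{i\cdot})|^2\in L^2(\mathbb{S}^1)$. This yields
\begin{align*}
\int_0^{2\pi}|\widehat{f\sigma}(re^{i\theta})|^4\,d\theta = (2\pi)^5 \sum_{D\in A+A}|T_D(r)|^2,\quad T_D(r):=\sum_{\substack{(\alpha,\beta)\in A^2\\ \alpha+\beta=D}}\hat f(\alpha)\hat f(\beta)J_\alpha(r)J_\beta(r),
\end{align*}
so that the problem reduces to bounding $(2\pi)^{15/2}\int_0^\infty(\sum_D|T_D|^2)^{3/2}r\,dr$ by $(2\pi)^{9/2}\I(0,0,0)\|f\|^6$. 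Property $\mathrm{P}(2)$ is then used to give an explicit form of each $T_D$: for $D\neq 0$ there is a unique unordered pair $\{a_1,a_2\}\subset A$ with $a_1+a_2=D$, $a_1\neq-a_2$, making $T_D=2\hat f(a_1)\hat f(a_2)J_{a_1}J_{a_2}$ (or $\hat f(a)^2J_a^2$ if $D=2a$); for $D=0$, $T_0=\sum_{u\in A\cap(-A)}(-1)^u\hat f(u)\hat f(-u)J_u^2$, which is pointwise bounded (via AM-GM on $|\hat f(u)\hat f(-u)|$) by $G(r):=\sum_{n\in A}|\hat f(n)|^2 J_n^2(r)$.

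To circumvent the $3/2$ power (which obstructs a direct polynomial expansion as in Theorem \ref{theroem_main_result}), I would split $\sum_D|T_D|^2=|T_0|^2+R(r)$ with $R(r):=\sum_{D\neq 0}|T_D|^2$, and apply the elementary inequality
\begin{align*}
(A+B)^{3/2}\leq A^{3/2}+\tfrac{3}{2}B\sqrt{A}+B^{3/2},\qquad A,B\geq 0,
\end{align*}
with $A=|T_0|^2$ and $B=R$. This yields three integrals. The leading term $\int|T_0|^3 r\,dr\leq \int G(r)^3 r\,dr=\sum_{n_1,n_2,n_3}|\hat f(n_1)|^2|\hat f(n_2)|^2|\hat f(n_3)|^2\I(n_1,n_2,n_3)$, which by Lemma \ref{lemma_bounds_on_bessel} (case (v) and its friends) is at most $\I(0,0,0)(\sum_n|\hat f(n)|^2)^3$, with equality exactly when $f$ is constant. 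For the remaining cross term $\int R|T_0|r\,dr$ and error term $\int R^{3/2}r\,dr$, I would expand $R$ via the explicit form of $T_D$ from $\mathrm{P}(2)$, convert any first-power Bessel factors $|J_aJ_b|$ to $J_a^2+J_b^2$ via AM-GM so that every integrand matches the form $J_n^2J_m^2J_\ell^2$ appearing in Lemma~\ref{lemma_bounds_on_bessel}, and convert first-power $|\hat f|$ factors to $|\hat f|^2$ similarly. The few exceptional Bessel integrals ($\I(1,1,2)$ and $\I(3,2,0)$) that are not bounded by $\tfrac{1}{9}\I(0,0,0)$ and $\tfrac{1}{15}\I(0,0,0)$, respectively, would be ruled out by the same combinatorial $\mathrm{P}(2)$-type arguments used at the end of the proof of Theorem~\ref{theroem_main_result}.

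The main obstacle lies in combining the three integrals while maintaining the sharp constant with equality \emph{only} for constants. The bound $(A+B)^{3/2}\leq A^{3/2}+\tfrac{3}{2}B\sqrt{A}+B^{3/2}$ is tight when $B=0$ (the constant case), but the cross term must be controlled using the ``slack'' in Lemma~\ref{lemma_bounds_on_bessel} (namely $\I(n,m,\ell)<\I(0,0,0)$ when the indices are not all zero). The crucial bookkeeping check, and the expected main difficulty, is to show that the slack produced by the Bessel estimates on the non-trivial indices exactly compensates the positive contribution of the cross and error terms, so that the inequality holds with the sharp constant $(2\pi)^{9/2}\I(0,0,0)$ and strict inequality whenever $\hat f(n)\neq 0$ for some $n\neq 0$.
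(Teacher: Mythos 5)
Your setup agrees with the paper's: Hecke--Bochner plus angular Plancherel gives $\int_{0}^{2\pi}|\widehat{f\sigma}(re^{i\theta})|^{4}\,d\theta=(2\pi)^{5}\sum_{D}|T_{D}|^{2}$, and property $\mathrm{P}(2)$ makes each $T_{D}$ with $D\neq 0$ a single monomial while $T_{0}$ collects the symmetric pairs. The divergence, and the gap, is in how you handle the exponent $3/2$. You split $\sum_{D}|T_{D}|^{2}=|T_{0}|^{2}+R$ and invoke $(A+B)^{3/2}\leq A^{3/2}+\tfrac{3}{2}B\sqrt{A}+B^{3/2}$, but you stop exactly at the decisive step: you never show that $\tfrac32\int R\,|T_{0}|\,r\,dr+\int R^{3/2}\,r\,dr$ fits into the slack left after the leading term is charged against $\I(0,0,0)\big(\sum_{n}|\ft f(n)|^{2}\big)^{3}$, and you acknowledge this as ``the expected main difficulty.'' This is not routine bookkeeping. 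First, $\int R^{3/2}r\,dr$ cannot be treated by ``expanding $R$'': $R$ is a sum over possibly infinitely many $D\neq 0$, and $\big(\sum_{D}a_{D}\big)^{3/2}$ admits no bound by $C\sum_{D}a_{D}^{3/2}$ with a universal $C$, so you are forced into a weighted Jensen argument anyway --- at which point you are reproducing the paper's mechanism on one piece of your decomposition while still owing the cross term. Second, your plan to dismiss the exceptional integrals by ``$\mathrm{P}(2)$-type combinatorial arguments'' does not work here: the set $\{0,2,3\}$ is a $B_2$-set, hence a $\mathrm{P}(2)$-set, and your cross term then genuinely produces $\I(3,2,0)$, which is \emph{not} bounded by $\tfrac{1}{15}\I(0,0,0)$; it must be absorbed numerically, not excluded. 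Third, the margins are tight: already for $A=\{0,1\}$ the cross term contributes $6\,\I(0,0,1)=\tfrac{6}{5}\I(0,0,0)$ to the monomial $|\ft f(0)|^{4}|\ft f(1)|^{2}$, whose total budget is $3\,\I(0,0,0)$, and the remainder must then absorb part of $\int R^{3/2}r\,dr$ after an AM--GM redistribution between monomials. A rough computation suggests this example closes, but nothing in your proposal establishes the general case.

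For comparison, the paper's proof sidesteps the splitting entirely. It applies AM--GM only inside the $D=0$ block to replace $|T_{0}|^{2}$ by terms with nonnegative coefficients, arriving at the single bound $\sum_{n_{1},n_{2}}c_{n_{1},n_{2}}|\ft f(n_{1})|^{2}|\ft f(n_{2})|^{2}J_{n_{1}}^{2}J_{n_{2}}^{2}$ with $c_{n_{1},n_{2}}\leq 3$ off the diagonal, $\leq 2$ on the nonzero diagonal and $=1$ at the origin; after normalizing $\sum_{n}|\ft f(n)|^{2}=1$ it applies Jensen's inequality with the probability weights $|\ft f(n_{1})|^{2}|\ft f(n_{2})|^{2}$ to move the power $3/2$ inside the double sum. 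Everything then reduces to $3^{3/2}\int_{0}^{\infty}|J_{m}(r)|^{3}|J_{\ell}(r)|^{3}r\,dr<\I(0,0,0)$ for $m\neq\ell$ and $2^{3/2}\I(n,n,n)<\I(0,0,0)$ for $n>0$, which follow from Cauchy--Schwarz and Lemma \ref{lemma_bounds_on_bessel} with room to spare (the pair $(1,2)$ is covered by its weaker bound $\tfrac16\I(0,0,0)$, and three-distinct-index integrals never arise). If you wish to salvage your route, the cleanest repair is to drop the $|T_{0}|^{2}+R$ splitting and apply that weighted Jensen step to the full positive quadratic form.
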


\proof Without loss of generality we can assume that $\sum_{n\in A}|\widehat{f}(n)|^2 =1$. Using Hecke-Bochner formula we have that  
\begin{align*}
    ||\widehat{f\sigma} & ||_{L^6_{rad}L^4_{ang}(\mathbb{R}^2)}^{ 6}= \int_0^\infty \bigg(\int_{\mathbb{S}^1} |\widehat{f\sigma}(r\omega)|^4 d\sigma(\omega) \bigg)^{6/4} rdr\\
    & =  (2\pi)^{15/2}\int_0^\infty \bigg(\sum_{\substack{n_1,n_2,n_3,n_4\in A\\ n_1+n_2=n_3+n_4}}
    \widehat{f}(n_1)\widehat{f}(n_2) \overline{\widehat{f}(n_3)\widehat{f}(n_4)}
    J_{n_1}(r)J_{n_2}(r)J_{n_3}(r)J_{n_4}(r) \bigg)^{3/2} rdr.
\end{align*}
 Now we use the fact that $A$ satisfies property $\mathrm{P}(2)$ to rewrite the sum in the integral as follows.
 \begin{align*}
  &   \sum_{\substack{n_1,n_2,n_3,n_4\in A\\ n_1+n_2=n_3+n_4}} 
    \widehat{f}(n_1)\widehat{f}(n_2)  \overline{\widehat{f}(n_3)\widehat{f}(n_4)} 
    J_{n_1}J_{n_2}J_{n_3}J_{n_4} \\ 
    = & \sum_{D\in A^2} \; \sum_{\substack{n_1,n_2,n_3,n_4\in A \\ n_1+n_2=D\\ n_3+n_4=D}}
    \widehat{f}(n_1)\widehat{f}(n_2) \overline{\widehat{f}(n_3)\widehat{f}(n_4)}
    J_{n_1}J_{n_2}J_{n_3}J_{n_4}\\
    = & \sum_{\substack{n_1,n_2\in A \\ n_1\neq -n_2}}\tau(n_1,n_2) |\widehat{f}(n_1)|^2|\widehat{f}(n_2)|^2  
    J_{n_1}^2J_{n_2}^2 \\
    & +\sum_{n_1,n_2\in A_s} \widehat{f}(n_1)\widehat{f}(-n_1)\overline{\widehat{f}(n_2)\widehat{f}(-n_2)} J_{n_1}J_{-n_1}J_{n_2}J_{-n_2} \\
    \leq & \sum_{\substack{n_1,n_2\in A \\ n_1\neq -n_2}}\tau(n_1,n_2) |\widehat{f}(n_1)|^2|\widehat{f}(n_2)|^2  
    J_{n_1}^2J_{n_2}^2 \\
    & + \sum_{n_1,n_2\in A_s} \bigg(\frac{|\widehat{f}(n_1)|^2J_{n_1}^2 + |\widehat{f}(-n_1)|^2J_{-n_1}^2}{2} \bigg)\bigg( \frac{|\widehat{f}(n_2)|^2J_{n_2}^2 + |\widehat{f}(-n_2)|^2J_{-n_2}^2}{2} \bigg)\\
    = & \sum_{n_1,n_2\in A} (\tau(n_1,n_2)\delta_{n_1\neq -n_2} + \delta_{n_1, n_2\in A_s})|\widehat{f}(n_1)|^2|\widehat{f}(n_2)|^2  
    J_{n_1}^2J_{n_2}^2
 \end{align*}
where $\tau(n_1,n_2)=1+\delta_{n_1\neq n_2}$ is the number of permutations of $(n_1,n_2)$. Hence by Jensen's inequality we obtain 
\begin{align*}
  & (2\pi)^{-15/2}  ||\widehat{f\sigma}||_{L^6_{rad}L^4_{ang}(\mathbb{R}^2)}^{ 6} \\ & \leq  \int_0^\infty \bigg(\sum_{n_1,n_2\in A}
    (\tau(n_1,n_2)\delta_{n_1\neq -n_2} + \delta_{n_1, n_2\in A_s})|\widehat{f}(n_1)|^2|\widehat{f}(n_2)|^2  
    J_{n_1}^2(r)J_{n_2}^2(r)\bigg)^{3/2} rdr\\
    \leq &  \int_0^\infty \sum_{n_1,n_2\in A} (\tau(n_1,n_2)\delta_{n_1\neq -n_2} + \delta_{n_1, n_2\in A_s})^{3/2} |J_{n_1}(r)|^3|J_{n_2}(r)|^3 |\widehat{f}(n_1)|^2|\widehat{f}(n_2)|^2 rdr\\
    = &  \sum_{n_1,n_2\in A} (\tau(n_1,n_2)\delta_{n_1\neq -n_2} + \delta_{n_1, n_2\in A_s})^{3/2} \bigg( \int_0^\infty |J_{n_1}(r)|^3|J_{n_2}(r)|^3  rdr \bigg) |\widehat{f}(n_1)|^2|\widehat{f}(n_2)|^2.
\end{align*}
To conclude we need the following estimates on integrals involving the products of six Bessel functions. First, we observe that for all $m\neq \ell$, $m,\ell\geq 0$ it holds that
$$ 3^{3/2} \int_0^\infty |J_{m}(r)|^3|J_{\ell}(r)|^3 rdr  { <} \int_0^\infty J_0^6 (r) rdr ~. $$
In fact, by H{\" o}lder inequality and by the estimates in Lemma \ref{lemma_bounds_on_bessel} we have that
\begin{align*}
    \int_0^\infty |J_{m}(r)|^3|J_{\ell}(r)|^3 rdr \; \leq & \; \bigg( \int_0^\infty |J_m(r)|^4 |J_\ell(r)|^2 rdr \bigg)^{1/2}\bigg(\int_0^\infty|J_\ell(r)|^4|J_m(r)|^2 rdr \bigg)^{1/2}\\
    \leq & \; ( \tfrac{\sqrt{2}}{5\sqrt{3}}\delta_{m \ell =0}\, +\tfrac{1}{9} \delta_{\lbrace m,\ell \rbrace \neq \lbrace 1,2 \rbrace,\, m\ell\neq 0}\, +\tfrac{1}{3\sqrt{6}}\delta_{\lbrace m, \ell\rbrace=\lbrace 1,2 \rbrace}\, )\bigg( \int_0^\infty J^6_0(r) rdr \bigg)^{1/2}\\
    < & \; 3^{-3/2} \int_0^\infty J_0^6(r) rdr.
\end{align*}
The second estimate that we need is the following: for all $n >0$ it holds that 
$$ 2^{3/2} \int_0^\infty J_n^6 (r) rdr < \int_0^\infty J_0^6(r) rdr $$
 which follows from Lemma \ref{lemma_bounds_on_bessel} again. Hence, we conclude that 
 \allowdisplaybreaks
 \begin{align*}
  ||\widehat{f\sigma}||_{L^6_{rad}L^4_{ang}(\mathbb{R}^2)}^{ 6} \leq  &  \;   (2\pi)^{15/2}  \bigg(\int_0^\infty J^6_0(r)rdr \bigg) \sum_{n_1,n_2\in A} |\widehat{f}(n_1)|^2|\widehat{f}(n_2)|^2\\
     = &  \; (2\pi)^{9/2} \bigg(\int_0^\infty J^6_0(r)rdr \bigg) ||f||_{L^2(\mathbb{S}^1)}^6 .
 \end{align*}
 Equality is attained  if and only if $f$ is constant. \qed

{
 \section*{Acknowledgements}
The authors are grateful to Christoph Thiele for stimulating discussions. This work was developed while F.G. was an Akademischer Rat at the Universit\"at Bonn and he acknowledges support from the Deutsche Forschungsgemeinschaft through the Collaborative Research Center 1060.}

\begin{center}

\end{center}

\end{document}